\numberwithin{equation}{section}
\numberwithin{table}{section}
\numberwithin{figure}{section}
\theoremstyle{plain}
\newtheorem{theorem}{Theorem}[section]
\newtheorem{lemma}[theorem]{Lemma}
\newtheorem{proposition}[theorem]{Proposition}
\theoremstyle{definition}
\numberwithin{theorem}{section}
\newskip\aline \newskip\halfaline
\newcommand{\cond}{\,\Vert\,}
\newcommand*\xbar[1]{%
   \hbox{%
     \vbox{%
       \hrule height 0.5pt % The actual bar
       \kern0.5ex%         % Distance between bar and symbol
       \hbox{%
         \kern-0.1em%      % Shortening on the left side
         \ensuremath{#1}%
         \kern-0.1em%      % Shortening on the right side
       }%
     }%
   }%
}
\DeclareFontFamily{OT1}{pzc}{}
\DeclareFontShape{OT1}{pzc}{m}{it}{<-> s * [1.10] pzcmi7t}{}
\DeclareMathAlphabet{\mathpzc}{OT1}{pzc}{m}{it}
\newcommand{\one}{{\mathbbm{1}}}
\newcommand\bE{{\mathbb E}}
\newcommand{\bP}{{{\mathbb P}}}
\newcommand\bR{{\mathbb R}}
\newcommand{\be}{{\boldsymbol{e}}}
\newcommand{\bsf}{\boldsymbol{f}}
\newcommand{\bn}{\boldsymbol{n}}
\newcommand{\bp}{{\boldsymbol{p}}}
\newcommand{\bx}{{\boldsymbol{x}}}
\newcommand{\bsD}{\boldsymbol{D}}
\newcommand{\bsN}{\boldsymbol{N}}
\newcommand{\bsX}{{\boldsymbol{X}}}
\newcommand{\bsY}{{\boldsymbol{Y}}}
\newcommand{\bsZ}{\boldsymbol{Z}}
\newcommand{\bgamma}{{\boldsymbol{\gamma}}}
\newcommand{\bGamma}{\boldsymbol{\Gamma}}
\newcommand{\bsi}{\boldsymbol{\sigma}}
\newcommand{\si}{{\sigma}}
\newcommand{\vfi}{{\varphi}}
\newcommand{\eD}{\EuScript{D}}
\newcommand{\eK}{\EuScript{K}}
\newcommand{\eS}{\EuScript{S}}
\DeclareMathOperator{\tr}{{\rm tr}}
\DeclareMathOperator{\vol}{vol}
\DeclareMathOperator{\diag}{Diag}
\DeclareMathOperator{\Sym}{\mathbf{Sym}}
\DeclareMathOperator{\Spec}{Spec}
\DeclareMathOperator{\Hess}{Hess}
\DeclareMathOperator{\GOE}{GOE}
\DeclareMathOperator{\Proj}{Proj}
\DeclareMathOperator{\var}{Var}
\DeclareMathOperator{\cov}{Cov}
\DeclareMathOperator{\cor}{Corr}
\DeclareMathOperator{\Var}{Var}
\DeclareMathOperator{\mult}{mult}
\newcommand{\ra}{\rightarrow}
\newcommand{\lan}{\langle}
\newcommand{\ran}{\rangle}
\newcommand{\rb}{\,\big]}
\newcommand{\lb}{\big[\,}
\newcommand{\rbr}{\,\big\}}
\newcommand{\lbr}{\big\{\,}
\newcommand{\rp}{\,\big)}
\newcommand{\lp}{\big(\,}
\newcommand{\lv}{\big\vert\,}
\newcommand{\rv}{\,\big\vert}
\newcommand{\Rp}{\,\Big)}
\newcommand{\Lp}{\Big(\,}
\def\inpr{\mathbin{\hbox to 6pt{\vrule height0.4pt width5pt depth0pt \kern-.4pt \vrule height6pt width0.4pt depth0pt\hss}}}
\def\rbinom#1#2{\ensuremath{\left(\kern-.3em\left(\genfrac{}{}{0pt}{}{#1}{#2}\right)\kern-.3em\right)}}
\newcommand{\as}{\mathrm{a.s.}}
\newcommand{\cpt}{{\mathrm{cpt}}}
\begin{document}

\title{A probabilistic computation of a Mehta integral} 

\author{Liviu I. Nicolaescu}
\thanks{}

\begin{abstract}  We use the Kac-Rice formula  to compute the Mehta integral describing  the normalization constant arising in the statistics of the Gaussian Orthogonal Ensemble. \end{abstract}

\date{ Last revised \today.}
\keywords{random matrices, Kac-Rice formula}

\address{Department of Mathematics, University of Notre Dame, Notre Dame, IN 46556-4618.}
\email{nicolaescu.1@nd.edu}
\urladdr{\url{http://www.nd.edu/~lnicolae/}}

\maketitle

\tableofcontents
%\addtocontents{toc}{Contents}

\section{Introduction}   We denote by $\Sym(\bR^m)$ the space of real symmetric   $m\times m$  matrices. This is a Euclidean space with respect to the inner product $(A,B):=\tr(AB)$. This inner product is invariant with respect to the action of the orthogonal group $O(m)$ on $\Sym(\bR^m)$.  

 We define
\begin{equation}\label{xiija}
\ell_{ij},\omega_{ij}:\Sym(\bR^m)\to\bR,\;\;\ell_{ij}(A)=a_{ij},\;\;\omega_{ij}(A):= \begin{cases}
a_{ij}, & i=j,\\
\sqrt{2}a_{ij}, & i<j.
\end{cases}
\end{equation}
The collection $(\omega_{ij})_{i\leq j}$  defines linear  coordinates on $\Sym(\bR^m)$ that are orthonormal with respect  to the above inner product on $\Sym(\bR^m)$. The volume density induced by this metric is
\[
\vol\lb dA\rb:=\prod_{i\leq j} d\omega_{ij}= 2^{\frac{1}{2}\binom{m}{2}}\prod_{i\leq j} d\ell_{ij}.
\]
For any    real numbers  $u,v$ such that
\begin{equation}\label{pos_inv}
v>0, mu+2v>0,
\end{equation}
 we denote by $\eS_m^{u,v}$  the space  $\Sym(\bR^m)$     equipped with the centered Gaussian measure $\bGamma_{u,v}\lb dA\rb$ uniquely determined by the covariance equalities
 \begin{equation}\label{Smuv}
 \bE\lb \ell_{ij}(A)\ell_{k\ell}(A) \rb =  u\delta_{ij}\delta_{k\ell}+ v(\delta_{ik}\delta_{j\ell}+ \delta_{i\ell}\delta_{jk}),\;\;\forall 1\leq i,j,\;k,\ell\leq m.
 \end{equation}
 In particular we have
 \begin{equation}\label{inv_gauss1}
 \bE\lb \ell_{ii}^2\rb = u+2v,\;\;\bE\lb \ell_{ii}\ell_{jj}\rb =u,\;\;\;\bE\lb \ell_{ij}^2\rb=v,\;\;\forall 1\leq i\neq j\leq m,
 \end{equation}
while all other covariances are trivial.  The  ensemble  $\eS^{0,v}$ is    a rescaled version of  the Gaussian Orthogonal Ensemble  (GOE) and we will refer to it as $\GOE_{m}^{v}$.   The inequalities (\ref{pos_inv}) guarantee that the covariance form defined by (\ref{Smuv})  is positive definite so that $\bGamma_{u,v}$ is nondegenerate.

For $u>0$ the ensemble $\eS_m^{u,v}$ can be given an alternate description.   More precisely   a random $A\in \eS_m^{u,v}$ can be described as a sum
\[
A= B+ \ X\one_m,\;\;B\in \GOE_{m}^{v},\;\; X\in \bsN(0, u),\;\;\mbox{ $B$ and $X$ independent}.
\]
We write  this
\begin{equation}
\eS_m^{u,v} =\GOE_{m}^{v}\hat{+}N(0,u)\one_m,
\label{eq: smgoe}
\end{equation}
where $\hat{+}$ indicates a sum of \emph{independent} variables.

 In the special case  $\GOE_{m}^{v}$ we have $u=0$  and  
\begin{equation}
\bGamma_{0,v}\lb dA\rb=\frac{1}{(4\pi v)^{\frac{m(m+1)}{4}} } e^{-\frac{1}{4v}\tr A^2}  \vol\lb  dA\rb.
\label{eq: gov}
\end{equation}
Note that $\GOE_{m}^{1/2}$ corresponds to the Gaussian measure on $\Sym(\bR^m)$  canonically associated to the inner product $(A,B)=\tr(AB)$.

We have a   \emph{Weyl integration formula} \cite {AGZ} which states that if  $f: \Sym(\bR^m)\ra \bR$ is a measurable  function which  is invariant under  conjugation, then the     value $f(A)$ at $A\in\Sym(\bR^m)$ depends only on the eigenvalues $\lambda_1(A)\leq \cdots \leq \lambda_n(A)$ of $A$ and we  have
\begin{equation}
\bE_{\GOE_m^v}\lb f(X)\rb =\frac{1}{\bsZ_m(v)} \int_{\bR^m}  f(\lambda_1,\dotsc ,\lambda_m) \underbrace{\left(\prod_{1\leq i< j\leq m}|\lambda_i-\lambda_j| \right)\prod_{i=1}^m e^{-\frac{\lambda_i^2}{4v}} }_{=:Q_{m,v}(\lambda)}\; |d\lambda_1\cdots d\lambda_m|,
\label{eq: weyl}
\end{equation}
where the normalization constant $\bsZ_m(v)$ is defined  by
\[
{\bsZ_m(v)} =\int_{\bR^m}   \prod_{1\leq i< j\leq m}|\lambda_i-\lambda_j| \prod_{i=1}^m e^{-\frac{\lambda_i^2}{4v}} |d\lambda_1\cdots d\lambda_m|
\]
\[
=(2v)^{\frac{m(m+1)}{4}} \times \underbrace{\int_{\bR^m}   \prod_{1\leq i< j\leq m}|\lambda_i-\lambda_j| \prod_{i=1}^m e^{-\frac{\lambda_i^2}{2}} |d\lambda_1\cdots d\lambda_m|}_{=:\bsZ_m}.
\]
The  integral $\bsZ_m$ is usually referred  to as \emph{Mehta's integral}.\index{Mehta integral} Its value was first determined  in 1960 by  M. L. Mehta, \cite{MD}. Later Mehta  observed that this integral  was known earlier to N. G. de Brujin \cite{Bruj_Mehta}. It was subsequently observed  that Mehta's integral is  a limit of  the \emph{Selberg integrals}, \cite[Eq. (2.5.11)]{AGZ}, \cite[Sec. 4.7.1]{For}. More precisely, we have
\begin{equation}
\bsZ_m=(2\pi)^{\frac{m}{2}} \prod_{j=0}^{m-1}\frac{\Gamma(\frac{j+3}{2})}{\Gamma(3/2)}=2^{\frac{3m}{2}}\prod_{j=0}^{m-1}\Gamma\Lp\frac{j+3}{2}\Rp.
\label{eq: zm}
\end{equation}
The goal of this note is to provide a  probabilistic proof of (\ref{eq: zm}).

We will  determine $\bsZ_m$ inductively by  computing explicitly  the ratios $\frac{\bsZ_{m+1}}{\bsZ_m}$, $\forall m\geq 1$ and observing by immediate direct computation that  
\[
\bsZ_1=\int_\bR e^{-t^2/2} dt =(2\pi)^{1/2}.
\]
Here is the strategy. Any  symmetric $(m+1)\times (m+1)$ matrix  $A$ determines a  function on the unit sphere $S^m\subset \bR^{m+1}$
\[
\Phi_A: S^m\to \bR,\;\;\Phi_A(\bx)=\frac{1}{2}\lp A\bx,\bx\rp,
\]
where $(-,-)$ is the canonical  inner product on $\bR^{m+1}$.  When $A\in\GOE_{m+1}^{v}$, then  with probability  $1$  the matrix $A$ is simple and the Gaussian random  function  $\Phi_A$ is Morse. 

To the random matrix $A\in\GOE_{m+1}^{v}$ we can   associate two random  measures on $\bR$. The first is the \emph{spectral measure}
\[
\si_A=\sum_{\lambda\in \Spec(A)} \delta_\lambda,
\]
where $\delta_x$ denotes the Dirac measure on $\bR$ concentrated at $x$. The second one is the \emph{discriminant measure}
 \[
\bsD_A=\sum_{\nabla\Phi_A(\bx)=0}\delta_{2\Phi_A(x)}.
\]
The critical values of $2\Phi_A$ are precisely the eigenvalues of $A$ and the critical points are the unit eigenvectors of $A$. The function is Morse iff $A$ is simple, i.e., its eigenvalues are distinct.  In this case to each critical value of $A$ there corresponds exactly two critical points. With probability $1$ we have
\[
\bsD_A=2\bsi_A.
\]
Then for any Borel subset $C\subset \bR$ we have
\begin{equation}\label{bsDAC0}
\bE\lb \bsD_A\lb C\rb\rb=2\bE\lb \bsi_A\lb C\rb\rb.
\end{equation}
In particular
\begin{equation}\label{bsDAC1}
\bE\lb \bsD_A\lb \bR\rb\rb=2\bE\lb \bsi_A\lb \bR\rb\rb=2(m+1).
\end{equation}
Using   the Kac-Rice formula  we will be able to express  $\bE\lb \bsD_A\lb \bR\rb\rb$ as an \emph{explicit} multiple of the ratio $\frac{\bsZ_{m+1}}{\bsZ_m}$.

Here is  the structure of the paper. Section \ref{s: 2}   contains several  probabilistic digressions. The first one concerns   the expectation of the absolute value of characteristic polynomial  of a random matrix $A\in \GOE$.   The second  one  describes  a  version of the Kac-Rice formula  needed in the proof.  The last digression of this section is a well known  classical result commonly referred  to as the Gaussian regression formula.   We give a coordinate free description of this result not readily available in traditional probabilistic sources,  but very convenient to use in geometric applications.  Then last section  provides the  details of the strategy outlined above.

\section{Probabilistic digressions}\label{s: 2}
 For any positive integer $n$ we define the \emph{normalized} $1$-point correlation function $\rho_{n,v}(x)$ of $\GOE_{n}^{v}$ to be
\[
\rho_{n,v}(x)= \frac{1}{\bsZ_n(v)}\int_{\bR^{n-1}} Q_{n,v}(x,\lambda_2,\dotsc, \lambda_n) d\lambda_1\cdots d\lambda_n.
\]
For any Borel measurable function $f:\bR\ra \bR$  we have \cite[\S 4.4]{DG}
 \begin{equation}
 \frac{1}{n}\bE_{\GOE_{n}^{v}} \lb \tr f(X)\rb = \int_{\bR} f(\lambda) \rho_{n,v}(\lambda) d\lambda. 
 \label{eq: 1pcor}
 \end{equation}
The equality (\ref{eq: 1pcor}) characterizes $\rho_{n,v}$.   We want to draw attention to a confusing situation in the existing literature on the subject.  Some authors,  such as M. L. Mehta  \cite{Me}, define the  $1$-point correlation function $R_n(x)$ by the equality
\[
\bE_{\GOE_n^{1/2}} \lb \tr f(X)\rb = \int_{\bR} f(\lambda)  R_n(\lambda) d\lambda.
\]
The   expected value of the absolute value of the  determinant of     of a  random $A\in \GOE_m^v$ can be expressed neatly in terms of the correlation function $\rho_{m+1,v}$.   More precisely, we have the following result first observed by Y.V. Fyodorov \cite{Fy}.

\begin{lemma}\label{lemma: exp_det_GOE}  Suppose $v>0$. Then for any $c\in\bR$ we have
\begin{equation}\label{exp_det_GOE}
\bE_{\GOE_m^v} \lb |\det(A-c\one_m)|\rb =\frac{e^{\frac{c^2}{4v}}\bsZ_{m+1}(v)}{\bsZ_m(v)} \rho_{m+1,v}(c)=\frac{e^{\frac{c^2}{4v}}(2v)^{\frac{m+1}{2}}\bsZ_{m+1}}{\bsZ_m} \rho_{m+1,v}(c).
\end{equation}
\label{lemma: exp-det}
\end{lemma}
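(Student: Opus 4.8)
The plan is to feed the conjugation-invariant function $f(A)=|\det(A-c\one_m)|$ into the Weyl integration formula (\ref{eq: weyl}) and then recognize the resulting integral as a multiple of the correlation function $\rho_{m+1,v}$ evaluated at $c$. Since the eigenvalues of $A$ are $\lambda_1(A),\dots,\lambda_m(A)$, we have $|\det(A-c\one_m)|=\prod_{i=1}^m|\lambda_i-c|$, a function of the spectrum alone, so (\ref{eq: weyl}) gives
\begin{equation*}
\bE_{\GOE_m^v}\lb |\det(A-c\one_m)|\rb=\frac{1}{\bsZ_m(v)}\int_{\bR^m}\prod_{i=1}^m|\lambda_i-c|\,Q_{m,v}(\lambda)\,|d\lambda_1\cdots d\lambda_m|.
\end{equation*}
The whole lemma then reduces to relating the integrand on the right to the $(m+1)$-variable weight $Q_{m+1,v}$ appearing in the definition of $\rho_{m+1,v}$.

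The key computation is a factorization of $Q_{m+1,v}$. Writing $Q_{m+1,v}(c,\lambda_1,\dots,\lambda_m)$, where I rename the $m$ integration variables $\lambda_2,\dots,\lambda_{m+1}$ of the definition of $\rho_{m+1,v}$ as $\lambda_1,\dots,\lambda_m$ and treat $c$ as the first of the $m+1$ spectral variables, the Vandermonde product separates the $c$-factors from the rest, $\prod_{1\le i<j\le m+1}|\mu_i-\mu_j|=\prod_{i=1}^m|c-\lambda_i|\cdot\prod_{1\le i<j\le m}|\lambda_i-\lambda_j|$, while the Gaussian weight splits as $e^{-c^2/4v}\prod_{i=1}^m e^{-\lambda_i^2/4v}$. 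Multiplying these yields the clean identity
\begin{equation*}
Q_{m+1,v}(c,\lambda_1,\dots,\lambda_m)=e^{-\frac{c^2}{4v}}\prod_{i=1}^m|\lambda_i-c|\;Q_{m,v}(\lambda).
\end{equation*}
Substituting this into the defining integral for $\rho_{m+1,v}(c)$ and comparing with the displayed expression for the expected determinant gives
\begin{equation*}
\rho_{m+1,v}(c)=\frac{e^{-\frac{c^2}{4v}}\,\bsZ_m(v)}{\bsZ_{m+1}(v)}\,\bE_{\GOE_m^v}\lb |\det(A-c\one_m)|\rb,
\end{equation*}
and solving for the expectation produces the first claimed equality.

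For the second equality I would invoke the rescaling $\bsZ_n(v)=(2v)^{n(n+1)/4}\bsZ_n$ recorded in the computation of $\bsZ_m(v)$ following (\ref{eq: weyl}). The ratio $\bsZ_{m+1}(v)/\bsZ_m(v)$ then carries the factor $(2v)^{[(m+1)(m+2)-m(m+1)]/4}=(2v)^{(m+1)/2}$, which is exactly the power in the stated formula. I do not anticipate a genuine obstacle: the only points demanding care are the bookkeeping in the Vandermonde split---checking that precisely the factors indexed by the distinguished variable $c$ are extracted---and the exponent arithmetic in the rescaling. One consistency check worth confirming is that the normalization conventions match, namely that the factor $\tfrac{1}{m+1}$ appearing on the left of (\ref{eq: 1pcor}) is already absorbed into the explicit integral definition of $\rho_{m+1,v}$, so that no spurious factor of $m+1$ survives in the final identity.
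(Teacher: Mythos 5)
Your proposal is correct and follows essentially the same route as the paper: apply Weyl's integration formula to the conjugation-invariant function $|\det(A-c\one_m)|$, insert the factor $e^{c^2/4v}e^{-c^2/4v}$ to reassemble the integrand into $Q_{m+1,v}(c,\lambda_1,\dotsc,\lambda_m)$, and identify the result with $\rho_{m+1,v}(c)$ via its explicit integral definition, with the second equality coming from the rescaling $\bsZ_n(v)=(2v)^{n(n+1)/4}\bsZ_n$. The exponent arithmetic $(2v)^{[(m+1)(m+2)-m(m+1)]/4}=(2v)^{(m+1)/2}$ checks out.
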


\begin{proof}Using  Weyl's integration formula we deduce
\[
\bE_{\GOE_m^v} \lb |\det(A-c\one_m)|\rb  =\frac{1}{\bsZ_m(v)}\int_{\bR^m} \prod_{i=1}^me^{-\frac{\lambda_i^2}{4v}}|c-\lambda_i|\prod_{i\leq j}|\lambda_i-\lambda_j| d\lambda_1\cdots d\lambda_m
\]
\[
=\frac{e^{\frac{c^2}{4v}}}{\bsZ_m(v)} \int_{\bR^m} e^{-\frac{c^2}{4v}}\prod_{i=1}^me^{-\frac{\lambda_i^2}{4v}}|c-\lambda_i|\prod_{i\leq j}|\lambda_i-\lambda_j| d\lambda_1\cdots d\lambda_m
\]
\[
= \frac{e^{\frac{c^2}{4v}}\bsZ_{m+1}(v)}{\bsZ_m(v)}\frac{1}{\bsZ_{m+1}(v)} \int_{\bR^m} Q_{m+1,v}(c,\lambda_1,\dotsc, \lambda_m) d\lambda_1\cdots d\lambda_m
\]
\[
=\frac{e^{\frac{c^2}{4v}}\bsZ_{m+1}(v)}{\bsZ_m(v)} \rho_{m+1,v}(c)=\frac{e^{\frac{c^2}{4v}}(2v)^{\frac{m+1}{2}}\bsZ_{m+1}}{\bsZ_m} \rho_{m+1,v}(c).
\]

\end{proof}

We will need a special version of the Kac-Rice formula. Let $(M,g)$ be a compact Riemann manifold. Denote by $\vol_g[-]$ the volume element on $M$ determined by $g$ and  $\nabla^g$  the Levi-Civita  connection of $g$.   If $F\in C^2(M)$, then we define the Hessian of $F$ at $\bp\in M$ to be the linear operator
 \[
 \Hess_F(\bp):T_\bp M\to T_\bp M,\;\; \Hess_F(\bp)X=\nabla^g_X\nabla
 F,
 \]
 where $\nabla^gF$ is the  metric gradient of $F$.
 
  Suppose that  $F:M \to\bR$ is a  Morse function. For any subset $S\subset M$ we denote  by $Z(S,dF)$ the number of critical points of $F$ inside $S$ and $B$ is an open subset  We denote by $\eD(F)$ the  \emph{discriminant set} \index{discriminant}  of $F$, i.e., the set of  critical values of $F$.  The \emph{discriminant measure} of $F$  is the pushforward
\[
\bsD_{F}=\sum_{t\in \bR} Z(F^{-1}(t), dF)\delta_t.
\]
The discriminant  measure is concentrated   on  $\eD(F)$. For $\vfi\in C^0_\cpt(\bR)$ we set
\[
\bsD_{F}\lb \vfi\rb:=\int_\bR \vfi d\bsD_{F}.
\]
When $F$ is random, $\bsD_{F}\lb \vfi\rb$ is a random variable. We have the following result \cite[Thm. 12.4.1]{AT}.

\begin{theorem}\label{th: KR} Suppose that  $F:M\to\bR$ is a $C^2$ Gaussian random function function satisfying the ampleness condition

\[
\tag{${\bf A}$}\label{tag: A}
\mbox{for any $\bp\in M$ the Gaussian  vector $F(\bp)\oplus dF(\bp)\in T^*_\bp M$ is nondegenerate.}
\]
We denote by $\bP_{F(p)}$ the probability distribution of the random variable $F(\bp)$ and by $p_{dF(p)}$ the probability \emph{density} of the Gaussian vector $dF(\bp)$. 

Then $F$ is $\as$ Morse and, for  any  function $\vfi\in C^0_\cpt(\bR)$ we have
 \begin{equation}\label{KR}
 \begin{split}
\bE\lb  \bsD_{F}[\vfi]\rb\hspace{12cm}\\
=\int_M\left(\int_\bR \bE\lb \lv \det \Hess_F(\bp)\rv\cond dF(\bp)=0,\,F(\bp)=t\rb \vfi(t)\bP_{F(\bp)}\lb dt\rb \right) p_{dF(\bp)}(0) \vol_g\lb d\bp\rb\\
=\int_M\bE\lb  \lv \det \Hess_F(\bp)\rv \vfi\lp F(\bp)\rp\cond dF(\bp)=0\rb.\hspace{5cm} 
\end{split}
\end{equation}
Above, $\bE\lb -\cond-\rb$ denotes appropriate conditional expectations.
\end{theorem}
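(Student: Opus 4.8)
The plan is to realize the critical points of $F$ as the zeros of the random section $dF$ of $T^*M$, counted with the weight $\vfi\lp F(\bp)\rp$ at a critical point $\bp$, and to deduce the formula from the Euclidean Kac-Rice meta-theorem after localizing. First I would cover $M$ by finitely many open sets each carrying a smooth orthonormal frame $(E_1,\dots,E_m)$ of $TM$, $m=\dim M$, and split the integral over $M$ along a measurable partition subordinate to this cover; since ``being a critical point'' is intrinsic, no partition of unity in the integrand is needed. On each piece the map $\bp\mapsto Y(\bp):=(E_1F(\bp),\dots,E_mF(\bp))\in\bR^m$ is a smooth $\bR^m$-valued Gaussian field whose zero set is exactly the critical set of $F$. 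Because the frame is orthonormal, the Jacobian of $Y$ at a zero coincides with the matrix of $\Hess_F(\bp)$ in that frame, so $|\det JY(\bp)|=|\det\Hess_F(\bp)|$ there; one records here that the connection terms $\nabla_{E_i}E_j$ drop out precisely because $dF(\bp)=0$, so the coordinate Jacobian and the intrinsic Hessian agree at critical points and the final answer is frame-independent.

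The analytic core is the regularized counting identity. Let $\delta_\ve$ be a smooth approximate identity on $\bR^m$. If $Y$ has only nondegenerate zeros in the chart and none on its boundary, the change-of-variables formula localized near each zero gives
\[
\sum_{Y(\bp)=0}\vfi\lp F(\bp)\rp=\lim_{\ve\to 0}\int \delta_\ve\lp Y(\bp)\rp \lv\det JY(\bp)\rv\vfi\lp F(\bp)\rp\,\vol_g\lb d\bp\rb.
\]
Taking expectations, interchanging $\bE$ with the spatial integral by Fubini, and conditioning on the value of the Gaussian vector $Y(\bp)$ yields
\[
\int \left(\int_{\bR^m}\delta_\ve(y)\,\bE\lb \lv\det JY(\bp)\rv\vfi\lp F(\bp)\rp\cond Y(\bp)=y\rb p_{Y(\bp)}(y)\,dy\right)\vol_g\lb d\bp\rb,
\]
and letting $\ve\to 0$ concentrates the inner integral at $y=0$, producing $\bE\lb |\det\Hess_F(\bp)|\,\vfi(F(\bp))\cond dF(\bp)=0\rb\,p_{dF(\bp)}(0)$. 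Summing over the pieces gives the second (compact) form of (\ref{KR}). To reach the first form I would decompose the conditional law once more: conditioning additionally on $F(\bp)=t$ and integrating against the marginal $\bP_{F(\bp)}\lb dt\rb$ pulls the factor $\vfi(t)$ outside the determinant expectation.

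Two ancillary points remain. For the $\as$ Morse assertion I would invoke Bulinskaya's lemma: the ampleness condition (\ref{tag: A}) makes $dF(\bp)$ nondegenerate for every $\bp$, and a Bulinskaya-type argument then shows that, with probability one, $dF$ vanishes only at points where $\Hess_F$ is nonsingular, i.e. $F$ has only nondegenerate critical points. Nondegeneracy of the full joint Gaussian vector $F(\bp)\oplus dF(\bp)$ guarantees that the densities $p_{dF(\bp)}$ and the regular conditional expectations used above exist and depend measurably on $\bp$, so every integral is well defined.

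The hard part will be the passage to the limit $\ve\to 0$ under the expectation, i.e. justifying that
\[
\int\delta_\ve(y)\,\bE\lb |\det JY(\bp)|\,\vfi(F(\bp))\cond Y(\bp)=y\rb p_{Y(\bp)}(y)\,dy\;\xrightarrow{\ve\to0}\;\bE\lb |\det JY(\bp)|\,\vfi(F(\bp))\cond Y(\bp)=0\rb p_{Y(\bp)}(0).
\]
This needs continuity in $y$ of the map $y\mapsto \bE\lb|\det JY(\bp)|\,\vfi(F(\bp))\mid Y(\bp)=y\rb p_{Y(\bp)}(y)$ together with a dominated-convergence/uniform-integrability bound, which in turn rests on local $L^{1+\eta}$ moment estimates for $\det\Hess_F$ that are uniform in $\bp$, and on the absence of accumulation of near-degenerate zeros. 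The $C^2$ regularity of $F$, the compactness of $M$, and the uniform nondegeneracy supplied by (\ref{tag: A}) are exactly what make these bounds hold; this is the technical content that one imports from \cite[Thm. 12.4.1]{AT}.
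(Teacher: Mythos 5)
The paper does not actually prove Theorem \ref{th: KR}: it imports the statement wholesale from \cite[Thm. 12.4.1]{AT}, so there is no internal argument to compare yours against. Your outline is the standard proof of the Kac--Rice counting formula (localize in an orthonormal frame, write the weighted count of zeros of $Y=(E_1F,\dots,E_mF)$ as $\lim_{\ve\to 0}\int\delta_\ve(Y)\,|\det JY|\,\vfi(F)$, take expectations, condition on $Y(\bp)=y$, let $\ve\to 0$), and the reductions you single out are the right ones: the connection terms $\nabla_{E_i}E_j$ vanish against $dF(\bp)=0$ so the frame Jacobian equals the intrinsic Hessian, and orthonormality of the frame identifies $p_{Y(\bp)}(0)$ with $p_{dF(\bp)}(0)$. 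Since you explicitly defer the two genuinely hard steps (the continuity/uniform-integrability bounds justifying the $\ve\to 0$ limit, and the Bulinskaya-type argument for the a.s.\ Morse property) back to \cite{AT}, what you have is a correct road map rather than a self-contained proof --- which puts it on essentially the same footing as the paper itself.

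One step needs correction. Passing from the second line of (\ref{KR}) to the first by the tower property produces integration of $\bE\lb \lv\det\Hess_F(\bp)\rv\cond dF(\bp)=0,\,F(\bp)=t\rb\vfi(t)$ against the \emph{conditional} law of $F(\bp)$ given $dF(\bp)=0$, not against the marginal $\bP_{F(\bp)}\lb dt\rb$; equivalently, the correct density factor in the first line is the joint density $p_{F(\bp),\,dF(\bp)}(t,0)$ rather than the product $\bP_{F(\bp)}\lb dt\rb\,p_{dF(\bp)}(0)$. The two agree exactly when $F(\bp)$ and $dF(\bp)$ are independent, which does hold in the paper's application (at the north pole, $\Phi_A(\bn)=\tfrac12 a_{00}$ and $d\Phi_A(\bn)=(a_{01},\dots,a_{0m})$ are independent), but not for a general Gaussian field satisfying only (\ref{tag: A}). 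Your sketch makes this substitution silently; you should either add the pointwise independence of $F(\bp)$ and $dF(\bp)$ as a hypothesis or state the first line with the joint density.
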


When applying the Kac-Rice formula   we need to evaluate certain conditional expectations. In  the Gaussian case this is readily achieved using the classical Gaussian regression formula.  In the remainder of this section we describe this Gaussian regression in a form convenient  in geometric applications.

 Suppose that $\bsX$ and $\bsY$ are  finite dimensional vector spaces.    Consider two  random vectors 
\[
X:(\Omega,\eS,\bP)\to\bsX,\;\;  Y:(\Omega,\eS,\bP)\to\bsY, 
\]
 where $(\Omega,\eS,\bP))$ is a probability space. The mean or expectation of $X$ is the vector
 \[
 m(X)=\bE\lb X\rb=\int_\Omega X(\omega) \bP\lb d\omega\rb\in\bsX,
 \]
 whenever the integral is well defined.   The random vector $X$ is called centered if $m(X)=0$. 
 
 The \emph{covariance form} \index{covariance form}  of $Y$ and $X$  is  the bilinear form 
\[
\cov\lb Y,X\rb:\bsY^*\times \bsX^*\to\bR
\]
given by 
\[
\cov\lb Y,X\rb(\eta,\xi)=\cov \lb\lan \eta,Y\ran, \lan \xi,X\ran\rb,\;\;\forall \eta\in \bsY^*,\;\xi \in \bsX^*.
\]
If $\bsX$ and $\bsY$ are equipped with inner products $(-,-)_\bsX$ and respectively $(-,-)_\bsY$, then we can identify $\cov\lb Y,X\rb$ with a linear operator $C_{Y,X}:\bsX\to \bsY$. Concretely, if $(\be_i)_{i\in I}$  and $(\bsf_j)_{j\in J}$ are \underline{\emph{orthonormal}} bases of $\bsX$ and respectively $\bsY$, and we set $X_i:=(\be_i, X)_\bsX$, $Y_j:=(\bsf_j, Y)_\bsY$, then in these bases the operator $C_{Y,X}$ is described by matrix $(c_{ji})_{(j,i)\in J\times I}$, where $c_{ji}:=\cov\lb Y_j,X_i
\rb$.   Hence
\[
C_{Y,X}\be_i=\sum_jc_{ji}\bsf_j.
\]
We will refer to $C_{Y,X}$  as the \emph{correlator} of $Y$ with $X$.  Some times, for typographical reasons, we will use the alternate  notation\index{$\cor\lb Y,X\rb$} $\cor\lb Y,X\rb:=C_{Y,X}$.

The variance operator of $X$ is  $\var\lb X\rb:=C_{X,X}$. We say that $X$ is nondegenerate if its variance operator is invertible. Observe   that $C_{X,Y}:\bsY\to \bsX$  is the  adjoint of $C_{Y,X}$, $C_{X,Y}=C_{Y,X}^*$.

 If $\bsX$ and $\bsY$ are equipped with inner products, then $\bsX\oplus \bsY$ is equipped with the direct sum of these inner products and in this case $\var\lb X\oplus Y\rb: \bsX\oplus \bsY\to\bsX\oplus \bsY$ admits the block decomposition
\[
\Var\lb X\oplus Y\rb=\left[
\begin{array}{cc}
\var\lb X\rb & C_{X,Y}\\
C_{Y,X} &\var\lb Y\rb
\end{array}\right].
\]
The random vectors $X,Y$ are said to be \emph{jointly Gaussian} \index{jointly Gaussian} if the random vector $X\oplus Y$ is Gaussian.

\begin{proposition}[Gaussian regression formula]\label{prop: RF} \index{Gaussian regression}  \index{regression! formula}Suppose that $X,Y$ are  Gaussian vectors valued in the Euclidean spaces $\bsX$ and respectively  $\bsY$. Assume additionally that\index{formula! Gaussian regression}

\begin{enumerate} 

\item the random vectors $X, Y$ are  jointly Gaussian  and,

\item  $X$ is nondegenerate.

\end{enumerate}

 Define the \emph{regression operator} \index{regression! operator}
\begin{equation}\label{regr0}
R_{Y,X}:\bsX\to \bsY,\;\;R_{Y,X}:= C_{Y,X}\var[X]^{-1}
\end{equation}
Then the following hold.

\smallskip

\noindent (a) The conditional expectation $\bE\lb Y\cond X\rb$ is the Gaussian vector  described by the linear regression formula
 \begin{equation}\label{reg1}
 \bE\lb Y\cond X\rb= m(Y)-R_{Y,X} m(X) +R_{Y,X}X.
 \end{equation}
\noindent (b) For any $x\in X$
\[
\bE\lb Y\lv  X=x\rb= m(Y)-R_{Y,X} m(X) +R_{Y,X}x.
\]
(c) The random vector vector $Z=Y-\bE\lb Y\cond X\rb $ is Gaussian and  independent of $X$. It has  mean $0$ and variance operator
\begin{equation}\label{reg2}
\Delta_{Y,X} =\Var\lb Y\rb-D_{Y,X}:\bsY\to\bsY,\;\;D_{Y,X}=C_{Y,X}\Var[X]^{-1}C_{X,Y}.
\end{equation}
 Moreover, for any bounded measurable function $f:\bsY\to \bR$  and any $x\in \bsX$ we have
\begin{equation}\label{reg3}
\bE\lb f(Y)\lv X=x\rb =\bE\lb f\lp Z+ m(Y)-R_{Y,X}m(X)+ R_{Y,X}x\rp\rb.
\end{equation}
In particular, if $X$ and $Y$ are centered we have
\begin{equation}\label{reg3a}
\bE\lb f(Y)\lv X=x\rb =\bE\lb f\lp Z+ R_{Y,X}x\rp\rb.
\end{equation}
\end{proposition}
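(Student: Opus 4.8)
The plan is to exhibit $Y$ as the sum of an affine-linear function of $X$ and a Gaussian residual that is \emph{independent} of $X$, so that conditioning on $X$ collapses only the linear part. First I would reduce to the centered case: replacing $X$ by $X-m(X)$ and $Y$ by $Y-m(Y)$ leaves all the covariance operators $\var[X]$, $\var[Y]$, $C_{Y,X}$, $C_{X,Y}$—and hence $R_{Y,X}$, $D_{Y,X}$, $\Delta_{Y,X}$—unchanged, so the general statements (\ref{reg1}), (b), and (\ref{reg3}) follow from their centered counterparts by translating the means back in at the end. Assuming $m(X)=m(Y)=0$, I set
\[
Z:=Y-R_{Y,X}X .
\]
Since $(Z,X)$ is the image of the jointly Gaussian pair $(Y,X)$ under a linear map, $Z$ and $X$ are again jointly Gaussian.

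The crux is a single covariance computation. Using bilinearity of the correlator together with the defining identity $R_{Y,X}\var[X]=C_{Y,X}$,
\[
C_{Z,X}=C_{Y,X}-R_{Y,X}\var[X]=C_{Y,X}-C_{Y,X}\var[X]^{-1}\var[X]=0 .
\]
Here is the one genuinely probabilistic input: because $Z$ and $X$ are jointly Gaussian and uncorrelated, they are independent (the joint covariance operator is block diagonal, so the joint characteristic function factors). Everything else is linear algebra. Given $Z\indep X$ we get $\bE\lb Z\cond X\rb=\bE\lb Z\rb=0$, whence
\[
\bE\lb Y\cond X\rb=\bE\lb Z\cond X\rb+R_{Y,X}X=R_{Y,X}X ,
\]
which is (\ref{reg1}) in the centered case; part (b) is its pointwise reading, $\bE\lb Y\Lv X=x\rb=R_{Y,X}x$.

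For part (c) I would compute $\var[Z]$ directly from the block form of $\Var\lb Y\oplus X\rb$, or equivalently by bilinear expansion:
\[
\var[Z]=\var[Y]-R_{Y,X}C_{X,Y}-C_{Y,X}R_{Y,X}^*+R_{Y,X}\var[X]R_{Y,X}^* .
\]
Using $R_{Y,X}^*=\var[X]^{-1}C_{X,Y}$ (since $\var[X]$ is self-adjoint and $C_{X,Y}=C_{Y,X}^*$), each of the three correction terms equals $D_{Y,X}=C_{Y,X}\var[X]^{-1}C_{X,Y}$, so $\var[Z]=\var[Y]-D_{Y,X}=\Delta_{Y,X}$, as claimed; independence of $Z$ from $X$ already identifies $Z$ as the centered Gaussian residual. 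Finally, the pushforward formula (\ref{reg3}) drops out: writing $Y=Z+m(Y)-R_{Y,X}m(X)+R_{Y,X}X$ with $Z\indep X$, the conditional law of $Y$ given $X=x$ is exactly the law of $Z+m(Y)-R_{Y,X}m(X)+R_{Y,X}x$, and integrating a bounded measurable $f$ against it yields (\ref{reg3}), with (\ref{reg3a}) the centered specialization.

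The main obstacle is conceptual rather than computational: the whole argument hinges on the implication \emph{jointly Gaussian $+$ uncorrelated $\Rightarrow$ independent}, so I would take care to verify that $(Z,X)$ really is jointly Gaussian (as a linear image of $(Y,X)$) before invoking it; the remaining steps are routine manipulations of the correlator operators.
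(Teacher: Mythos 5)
Your proposal is correct. The paper itself does not prove this proposition --- it simply refers the reader to Aza\"{i}s--Wschebor for the proof --- so there is nothing internal to compare against; but your argument is exactly the standard one that the cited source gives: reduce to the centered case, set $Z=Y-R_{Y,X}X$, check $C_{Z,X}=0$ by the identity $R_{Y,X}\var[X]=C_{Y,X}$, invoke the fact that jointly Gaussian plus uncorrelated implies independent, and then read off the conditional expectation, the variance $\var[Z]=\var[Y]-D_{Y,X}$ (your three correction terms do each collapse to $D_{Y,X}$ via $R_{Y,X}^*=\var[X]^{-1}C_{X,Y}$), and the pushforward formula (\ref{reg3}). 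You correctly flag the one probabilistic input (uncorrelated Gaussians are independent) and verify joint Gaussianity of $(Z,X)$ before using it; the proof is complete as written.
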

For a proof we  refer to \cite[Prop. 2.1]{AzWs}.

  \section{The computation of the Mehta integral} As explained in the introduction,  when $A$ runs in the Gaussian ensemble $\GOE_{m+1}^{v}$ we obtain a Gaussian function
\[
\Phi=\Phi_A: S^m\to\bR.
\]
This function  is invariant under the natural $O(m+1)$-action on $S^m$.  

\begin{lemma}\label{lemma: quad_Morse} The Gaussian function $\Phi_A$ is $\as$  Morse.
\end{lemma}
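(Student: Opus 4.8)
The plan is to reduce the statement to the almost-sure \emph{simplicity} of the random matrix $A$ (all eigenvalues distinct), and then dispatch that by a standard genericity argument. As already observed in the introduction, the critical points of $\Phi_A$ on $S^m$ are exactly the unit eigenvectors of $A$: the ambient gradient of $\frac{1}{2}(A\bx,\bx)$ is $A\bx$, and requiring it to be normal to $S^m$ at $\bx$ gives the eigenvalue equation $A\bx=\lambda\bx$. First I would record the companion Hessian computation. At a critical point $\bx_0$ with $A\bx_0=\lambda\bx_0$, the tangent space is $T_{\bx_0}S^m=\bx_0^\perp$, which is $A$-invariant because $A$ is symmetric; a Lagrange-multiplier computation identifies the intrinsic Hessian of $\Phi_A$ along $S^m$ with the restriction of $A-\lambda\one_{m+1}$ to $\bx_0^\perp$. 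This operator is invertible precisely when $\lambda$ is a \emph{simple} eigenvalue of $A$, since a repeated eigenvalue would contribute a second eigenvector orthogonal to $\bx_0$ and hence a kernel direction. Thus $\Phi_A$ is Morse if and only if $A$ has distinct eigenvalues.

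It therefore suffices to show that $A\in\GOE_{m+1}^v$ is almost surely simple. For this I would introduce the discriminant $\Delta(A)$ of the characteristic polynomial $\det\lp t\one_{m+1}-A\rp$. This is a polynomial in the entries $(\ell_{ij}(A))_{i\le j}$, and it vanishes exactly when $A$ has a repeated eigenvalue. The decisive point is that $\Delta$ is \emph{not} identically zero: it is nonzero on any diagonal matrix with distinct diagonal entries. Hence its zero locus $\{\Delta=0\}$ is a proper algebraic subvariety of $\Sym(\bR^{m+1})\cong\bR^{\binom{m+2}{2}}$, and a proper algebraic subvariety has Lebesgue measure zero.

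Finally I would invoke the explicit form of the Gaussian density. By (\ref{eq: gov}) the measure $\bGamma_{0,v}$ is absolutely continuous with respect to the Lebesgue volume density $\vol\lb dA\rb$, with a strictly positive smooth density; consequently it assigns probability $0$ to the Lebesgue-null set $\{\Delta=0\}$. Therefore $A$ is simple with probability $1$, and by the first paragraph $\Phi_A$ is almost surely Morse. The argument is essentially routine; the only points deserving a word of care are the identification of the Hessian with $(A-\lambda\one_{m+1})|_{\bx_0^\perp}$, which is what makes Morse degeneracy coincide exactly with eigenvalue multiplicity, and the non-vanishing of the discriminant polynomial, which guarantees that the exceptional set is genuinely negligible rather than all of $\Sym(\bR^{m+1})$.
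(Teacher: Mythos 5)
Your argument is correct, but it follows a genuinely different route from the paper. The paper does not touch eigenvalue simplicity at all: it verifies the ampleness condition (\ref{tag: A}) by showing that the Gaussian vector $\nabla\Phi_A(\bx)=A\bx-(A\bx,\bx)\bx$ is nondegenerate at each $\bx\in S^m$ (the map $A\mapsto\Proj_\bx A\bx$ is onto $T_\bx S^m$ and the ensemble $\GOE_{m+1}^v$ is nondegenerate), and then the almost-sure Morse property comes for free from Theorem \ref{th: KR}, which is being invoked anyway for the Kac--Rice formula. Your proof instead characterizes exactly when $\Phi_A$ fails to be Morse --- namely when $A$ has a repeated eigenvalue, via the identification of the intrinsic Hessian at a critical point with $(A-\lambda\one_{m+1})|_{\bx_0^\perp}$ --- and then kills the exceptional set by the discriminant/measure-zero argument. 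Your route is more elementary and self-contained (no appeal to the Adler--Taylor machinery) and yields strictly more information: it pins down the precise degeneracy locus $\{\Delta=0\}$ rather than just its negligibility, and your Hessian computation is in fact the same one the paper needs later at the north pole, where $\Hess_A(\bn)=A_*-a_{00}\one_m$. What the paper's approach buys is brevity and uniformity: checking pointwise nondegeneracy of the gradient is a local, purely Gaussian verification that works verbatim for random functions with no algebraic structure, whereas your argument leans on the special fact that the critical points and Hessians of $\Phi_A$ are governed by the spectral data of a single matrix. Both proofs are complete; the only step in yours that deserves the care you already gave it is the equivalence ``Hessian degenerate at $\bx_0$ $\Leftrightarrow$ $\lambda$ multiple,'' which is exactly what makes Morse failure coincide with vanishing of the discriminant.
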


\begin{proof} It suffices to show that the Gaussian section $\nabla\Phi_A$ of $TS^m$ i satisfies the ampleness condition (\ref{tag: A}).  Let $\bx\in S^m$. If  $\Proj_\bx:\bR^{m+1}\to\bR^{m+1}$ the orthogonal projection onto $T_\bx S^m$, then
\[
\nabla\Phi_A(\bx)=\Proj_\bx A\bx= A\bx- (A\bx,\bx)\bx.
\]
The map
\[
\Sym_{m+1}(\bR)\ni A\mapsto A\bx\in \bR^{m+1}
\]
is onto   and thus the map
\[
\Sym_{m+1}(\bR)\ni A\mapsto \Proj_\bx A\bx\in T_\bx S^m
\]
is also onto, thus proving  that  the gradient $\nabla\Phi_A(\bx)$ is nondegenerate since the Gaussian ensemble $\GOE_{m+1,v}$ is nondegenerate.
\end{proof}

The spectral measure of $A$ is
\[
\bsi_{A}:=\sum_{\lambda\in \Spec(A)} \mult(\lambda)\delta_\lambda.
\]
The discriminant  measure  of $\Phi_A$ is
\[
\bsD_A=\sum_{\nabla\Phi_A(\bx)=0}\delta_{2\Phi_A(x)}.
\]
 With probability $1$ we have $\bsD_A=2\bsi_A$. Then for any Borel subset $C\subset \bR$ we have
\begin{equation}\label{bsDAC}
\frac{1}{(m+1)}\bE\lb \bsD_A\lb C\rb\rb=\frac{2}{m+1}\bE\lb \bsi_A\lb C\rb\rb=2\int_C\rho_{m+1,v}(\lambda) d\lambda.
\end{equation}
Using the Kac-Rice formula (\ref{KR})  we will give an alternate description of the left-hand-side of the above equality.  We will need to describe explictly the integrand in this formula. 

For $\bx\in S^m$ we denote by $\Hess_A(\bx)$ the Hessian of $\Phi_A$ at $\bx$ viewed  as  a symmetric operator $T_\bx S^m\to T_\bx S^m$.

Denote by  $(x^0,x^1,\dotsc, x^m)$ the canonical Euclidean coordinates on $\bR^{m+1}$.  Since $\Phi_A$ is $O(m+1)$ invariant, the distribution of $\Hess_A(\bx)$ is independent of $\bx$ so it suffices to determine it at any point of our choosing.  Suppose that $\bx$ is the north pole 
\[
\bx=\bn=(1,0,\dotsc, 0)\in \bR^{m+1}
\]
Then $T_{\bn}S^m=\{x^0=0\}$  and $\bx_*:=\lp x^1,\dotsc, x^m)$ are orthonormal coordinates on $T_{\bn}S^m$.   The coordinates $\bx_*$ also define  local coordinates on $S^m$. More precisely,  the upper  hemisphere
\[
S^m_+:=\lbr \bx\in S^m;\;\;x^0>0 \rbr
\]
admits the parametrization
\[
\bx^*\mapsto \bx\lp\bx_*\rp= \lp x^0(\bx_*) ,\bx^*\rp\in S^m,\;\;x^0(\bx_*)=\sqrt{1-\Vert\bx_*\Vert^2}.
\]
The round metric  on $S^m$ satisfies
\begin{equation}\label{near_euclidean}
g_{ij}=\delta_{ij}+ O\lp\Vert\bx_*\Vert^2\rp\;\;\mbox{near $\bn$}.
\end{equation}
On the upper hemisphere we will view $\Phi_A$ as a function of $\bx_*$.  

If $A=(a_{ij})_{0\leq i,j\leq m}$, then  in the coordinates $\bx_*$ we have
  \[
  \Phi_A(\bx)=\frac{1}{2}a_{00}\lp 1-\Vert \bx_*\Vert^2\rp+ \frac{1}{2}\sum_{j=1}^m a_{jj} (x^j)^2+\sum_{0\leq j<k\leq m} a_{jk}x^jx^k,
  \]
  \[
  =\frac{1}{2}a_{00}^2+  \frac{1}{2}\sum_{j=1}^m \lp a_{jj}-a_{00}\rp (x^j)^2+\sum_{0\leq j<k\leq m} a_{jk}x^jx^k,
  \]
   \[
  \nabla\Phi_A(\bn)=d\Phi_A(\bx_*)\vert_{\bx_*=0}=\sum_{j=1}^ma_{0j} dx^j.
  \]
  Since $A\in \GOE_{m+1}^{v}$, covariance kernel  of $\Phi_A$ is
  \[
  \eK_A(\bn,\bx)=\bE\lb \Phi_A(\bn)\Phi_A(\bx)\rb= \frac{1}{4} \lp 1-\Vert\bx_*\Vert^2\rp\bE\lb a_{00}^2\rb= \frac{v}{2} \lp 1-\Vert\bx_*\Vert^2\rp.
  \]
Denote by $A_*$ the $m\times m$ matrix $A_*=(a_{ij})_{1\leq i\leq m}$. Note that $A_*\in \GOE_m^v$. Using (\ref{near_euclidean})
\[
  \Hess_A(\bn)= A_*-a_{00}\one_m.
  \]
 Since $a_{00}$ is independent of $A_*$  we deduce  from (\ref{eq: smgoe}) that $\Hess_A(0)\in \eS^{2v,v}_m$, where $\eS_m^{u,v}$ is the $O(m)$-invariant Gaussian ensemble defined by (\ref{Smuv}).  If we set
 \[
 L_{ij}=\ell_{ij}\lp \Hess_A(\bn)\rp,\;\;  \Omega_{iij}=\omega_{ij}\lp \Hess_A(\bn)\rp,
 \]
 where $\ell_{ij}$ and $\omega_{ij}$ are defined by (\ref{xiija}), then
 \[
  \bE\lb L_{ij}L_{k\ell}(A) \rb =  2v\delta_{ij}\delta_{k\ell}+ v(\delta_{ik}\delta_{j\ell}+ \delta_{i\ell}\delta_{jk}),\;\;\forall 1\leq i,j,.k,\ell\leq m.
\]
  Note that $\nabla\Phi_A(\bn)=(a_{01},\dotsc, a_{0n})$ is independent of $\Phi_A(\bn)$ and of  $\Hess_A(\bn)$ and since $A\in \GOE_{m+1}^v$ we  deduce from (\ref{Smuv}) that
 \begin{equation}\label{grad_iso_12}
 \var\lb \Phi_A \rb=v\one_m.
 \end{equation}
Moreover the  correlator $\cor\lb \Hess_A(\bn),\Phi_A(\bn\rb:\bR\to\Sym_m(\bR)$ is  given by
\[
\bR\ni x\mapsto -x\one_m\in\Sym_m(\bR).
\]
Set
\[
W=\left[
\begin{array}{c}
\Phi_A(\bn)\\
\nabla \Phi_A(\bn)
\end{array}
\right]= \left[
\begin{array}{c}
\frac{1}{2}a_{00}\\
a_{01}\\
\vdots\\
a_{0m}
\end{array}
\right].
\]
Note that
\[
\var\lb W\rb= \diag\lp \frac{v}{2}, \underbrace{2v,\dotsc, 2v}_m\rp.
\]
Denote by $\xbar{\Hess}_A(\bn)$ the random symmetric matrix with variance given by the regression formula
\[
\var\lb \xbar{\Hess}_A(\bn)\rb =\var\lb \Hess_A(\bn)\rb -\cor\lb \Hess_A(\bn),W\rb \var\lb W\rb^{-1}\cor\lb W, \Hess_A(\bn)\rb.
\]
Set
\[
\xbar{L}_{ij}=\ell_{ij}\lp \xbar{\Hess}_A(\bn)\rp,\;\;\xbar{\Omega}_{ij}:=\omega_{ij}\lp \xbar{\Hess}_A(\bn)\rp,
\]
 and 
 \[
 C_{ij|k}:=\cov\lb \Omega_{ij}, W_k\rb,\;\;1\leq i\leq j\leq m,\;\;0\leq k\leq m.
 \]
 Note that
 \[
 C_{ij|k}=0,\;\;\forall i,j,\;\;\forall k>0,\;\;C_{ij|k}=0,\;\;\forall  i<j,\;\;\forall k\geq 0,
 \]
 and
 \[
 C_{ii|0}= \frac{1}{2}\bE\lb (a_{ii}-a_{00})a_{00}\rb=-\frac{1}{2}\bE\lb a_{00}^2\rb=-v.
 \]
 If we write
\[
\var\lb W\rb^{-1}= \lp t_{ab}\rp_{0\leq a,b\leq m},
\]
then
\[
\bE\lb \xbar{\Omega}_{ij}\xbar{\Omega}_{k\ell}\rb = \bE\lb \Omega_{ij}(\bx)\Omega_{k\ell}(\bx)\rb-\sum_{a,b=0}^m C_{ij| a}t_{ab}C_{k\ell |b} = \bE\lb \Omega_{ij}(\bx)\Omega_{k\ell}(\bx)\rb-\frac{2}{v}C_{ij|0}C_{k\ell|0}
\]
  For $i\neq j$
\[
\bE\lb  \xbar{\Omega}_{ii} \xbar{\Omega}_{jj}\rb=  \bE\lb \Omega_{ii}(\bx)\Omega_{jj}(\bx)\rb-2v=0
\]
\[
\bE\lb  \xbar{\Omega}_{ii}^2\rb =2v,
\]
\[
\bE\lb \xbar{\Omega}_{ij}\xbar{\Omega}_{k\ell}\rb = \bE\lb \Omega_{ij}(\bx)\Omega_{k\ell}(\bx)\rb,\;\;\forall 1\leq k\leq\ell.
\]
We deduce that $\xbar{\Hess}_A\in \GOE_{m}^{v}$ so  $\xbar{\Hess}_A$ has the same distribution as $A$.

 The regression operator
\[
R_{\Hess_A, W}=\cor\lb \Hess_A,W\rb \var\lb W^{-1}\rb:\bR^{m+1}\to\Sym_{m}(\bR)
\]
is
\[
\left[
\begin{array}{c}
w_0\\
w_1\\
\vdots\\
w_m
\end{array}
\right]\mapsto -2w_0\one_m.
\]
Using the regression formula we deduce
 \[
  \bE\lb |\Hess_A(\bn)|\cond W(\bn)=(t/2,0)\rb=\bE_{\GOE_{m,v}}\lb  \vert\det (A-v t)\vert\rb. 
\]
Since $2\Phi_A(\bn)=a_{00}$ is Gaussian with  variance $2v$, we deduce. from the Kac-Rice formula (\ref{KR})  that for  any Borel subset $C\subset \bR$ we have
\[
\bE\lb \bsD_A\lb C\rb\rb=\int_C \rho_A(t)\bgamma_{2v}\lb dt\rb,
\]
where
\begin{equation}\label{e_discr_sph}
\begin{split}
\rho_A(t)=\int_{S^m} \bE\lb |\Hess_A(\bx)|\cond  2\Phi_A(\bx)= t,\;\nabla\Phi_A(\bx)=0\rb  p_{\nabla\Phi_A(\bx)}(0) d\bx\\
\stackrel{(\ref{grad_iso_12})}{=}\lp 2\pi v\rp^{-m/2} \int_{S^m} \bE\lb |\Hess_A(\bx)|\cond W(\bx)=(t/2,0)\rb d\bx
\\
= \lp 2\pi v\rp^{-m/2} \bE\lb |\Hess_A(\bn)|\cond W(\bn)=(t/2,0)\rb \vol\lb S^m\rb\\
=\lp 2\pi v\rp^{-m/2} \vol\lb S^m\rb\bE_{\GOE_m^v}\lb |\det (A-vt)|\rb.
\end{split}
\end{equation}
In  Lemma \ref{lemma: exp_det_GOE}  we showed
\[
\bE_{\GOE_{m,v}}\lb \det (A-vt)|\rb=(2v)^{\frac{m+1}{2}} e^{\frac{v^2t^2}{4v}}\frac{\bsZ_{m+1}}{\bsZ_m} \rho_{m+1, v}(vt).
\]
Assume now that  $v=1$.  
\[
\bE_{\GOE_m^1}\lb \det (A-t)|\rb = e^{\frac{t^2}{4}}2^{\frac{m+1}{2}}\pi^{-1/2}\frac{\bsZ_{m+1}}{\bsZ_m} \rho_{m+1, 1}(t)
\]
Since $\bgamma_{2v}\lb dt\rb=e^{-\frac{t^2}{4}}\frac{dt}{\sqrt{4\pi}}$ we deduce
\[
\bE\lb \bsD_A\lb C\rb\rb =\lp 2\pi \rp^{-m/2} 2^{\frac{m+1}{2}}\vol\lb S^m\rb\frac{\bsZ_{m+1}}{\bsZ_m}\int_C\rho_{m+1,1}(t)  \frac{dt}{\sqrt{4\pi}}.
\]
On the other hand,  we deduce from  (\ref{bsDAC}) that
\[
\frac{1}{(m+1)}\bE\lb \bsD_A\lb C\rb\rb = 2\int_C\rho_{m+1,1}(t)  dt,
\]
so that
\[
\frac{ \lp 2\pi \rp^{-m/2} 2^{\frac{m+1}{2}}\vol\lb S^m\rb}{(m+1)} \frac{\bsZ_{m+1}}{\bsZ_m}(4\pi)^{-1/2}=2.
\]
Using the fact that
\[
\frac{\vol\lb S^m\rb }{m+1}= \frac{\pi^{\frac{m+1}{2}} }{\Gamma(\frac{m+3}{2}) }
\]
we deduce 
\[
\frac{\bsZ_{m+1}}{\bsZ_m}=\frac{\Gamma(\frac{m+3}{2}) }{\pi^{\frac{m+1}{2}} }\cdot \frac{2(2\pi)^{m/2}(4\pi)^{1/2}}{2^{\frac{m+1}{2}}}=2^{3/3}\Gamma\Lp\frac{m+3}{2}\Rp.
\]
Note that
\[
\bsZ_1=\int_\bR e^{-t^2/2} dt =(2\pi)^{1/2}.
\]
We deduce immediately the  equality   (\ref{eq: zm})
\[
\bsZ_m=\bsZ_1\prod_{j=1}^{m-1}\frac{\bsZ_{j+1}}{\bsZ_j}=2^{\frac{3m}{2}}\prod_{j=0}^{m-1}\Gamma\Lp \frac{j+3}{2}\Rp.
\]

\end{document}